\documentclass[12pt]{amsart}
\usepackage{amsmath,amsthm,amssymb,fullpage,caption}
\usepackage{tikz}
\usetikzlibrary{calc}
\usetikzlibrary{shapes.symbols}
\usetikzlibrary{arrows}
\usetikzlibrary{decorations.markings}
\usetikzlibrary{shapes}
\captionsetup{labelsep=none}

\newcommand{\size}[1]{\left \vert #1 \right \vert}

\newcommand{\class}[1]{#1}

\newtheorem{theorem}{Theorem}[section]
\newtheorem{lemma}[theorem]{Lemma}
\newtheorem{conj}[theorem]{Conjecture}

\begin{document}

\title{Cops and Robbers is \class{EXPTIME}-complete}

\author{William B. Kinnersley}
\address{Department of Mathematics, University of Rhode Island, University of Rhode Island, Kingston, RI,USA, 02881}
\email{\tt billk@mail.uri.edu}

\subjclass[2010]{Primary 05C57; Secondary 05C85}
\keywords{Cops and Robbers, EXPTIME-complete, vertex-pursit games, moving target search}  

\begin{abstract}
We investigate the computational complexity of deciding whether $k$ cops can capture a robber on a graph $G$.  Goldstein and Reingold (\cite{GR95}, 1995) conjectured that the problem is \class{EXPTIME}-complete when both $G$ and $k$ are part of the input; we prove this conjecture.
\end{abstract}

\maketitle

\begin{section}{Introduction}

{\em Cops and Robbers} is a vertex-pursuit game that has received much recent attention.  The game has two players, a set of $k$ cops and a single robber.  The cops and robber occupy vertices of a graph $G$; more than one entity may occupy a single vertex.  The game has perfect information, meaning that at all times, both players have full knowledge of the graph and of all moves played thus far, and this knowledge determines the players' strategies; in particular, both players know the positions of the cops and the robber.  At the beginning of the game, the players freely choose vertices to occupy, with the cops choosing first and the robber last.  The game proceeds in {\em rounds}, each consisting of a cop turn followed by a robber turn.  In each round, each cop may remain on her current vertex or move to an adjacent vertex, after which the robber likewise chooses to remain in place or move to an adjacent vertex.  The cops win if any cop ever occupies the same vertex as the robber; we call this {\em capturing} the robber.  Conversely, the robber wins by evading capture forever.

Placing one cop on each vertex of $G$ ensures an immediate win for the cops.  We are thus justified in defining the {\em cop number} of $G$, denoted $c(G)$, to be the minimum number of cops needed to capture a robber on $G$ (regardless of how the robber plays).  Determination of the cop number is the central problem in the study of Cops and Robbers.  Quilliot~\cite{Qui78}, along with Nowakowski and Winkler~\cite{NW83}, independently introduced the game and characterized graphs with cop number 1.  Aigner and Fromme~\cite{AF84} initiated the study of graphs having larger cop numbers.  For more background on Cops and Robbers, we direct the reader to~\cite{BN11}.

In this paper, we investigate the computational complexity of deciding whether a graph has cop number at most $k$; this is a natural question, as the game of Cops and Robbers has applications in the field of Artificial Intelligence (see for example~\cite{ILBG08,MS09}).  More precisely, we study the associated decision problem
$$\mbox{\textsc{C\&R}: Given a graph $G$ and positive integer $k$, is $c(G) \le k$?}$$
\textsc{C\&R} clearly belongs to \class{EXPTIME} (the class of decision problems solvable in exponential time), since the number of possible game states is bounded above by $n^{k+1}$, where $n = \size{V(G)}$.  (This observation also implies that the problem is solvable in polynomial time when $k$ is fixed in advance, rather than being considered part of the input; see also~\cite{BC09,BI93,CM12,HM06}.)  Goldstein and Reingold~\cite{GR95} showed that the generalization of \textsc{C\&R} in which $G$ may be directed is \class{EXPTIME}-complete.  They also proved \class{EXPTIME}-completeness when the initial positions of the cops and robber are given as part of the input.  More recently, Fomin, Golovach, and Pra\l{}at~\cite{FGP12} showed that the problem is \class{PSPACE}-complete under various restrictions on the duration of the game.

Goldstein and Reingold were unable to establish any hardness results regarding the original game, in which $G$ is undirected and the players may choose their initial positions.  They did, however, pose the following conjecture:
\begin{conj}[Goldstein and Reingold, 1995]\label{conj:main}
\textsc{C\&R} is \class{EXPTIME}-complete. 
\end{conj}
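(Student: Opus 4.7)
Since \textsc{C\&R} is already known to lie in \class{EXPTIME}, only hardness need be established. The plan is to reduce from one of the \class{EXPTIME}-complete variants obtained by Goldstein and Reingold~\cite{GR95}: either directed Cops and Robbers with free initial positions, or undirected Cops and Robbers with prescribed initial positions. Either route requires bridging a specific gap---the former demands simulating one-way edge traversal in an undirected graph, the latter demands forcing both players to adopt the designated starts of their own accord. I will sketch the plan assuming we reduce from the undirected fixed-initial-position variant, since this sidesteps the need to simulate directedness.

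Given an instance $(H, k, c_1^*, \ldots, c_k^*, r^*)$ of the fixed-initial-position game, I would construct $(G, k')$ with $G$ containing $H$ as an induced subgraph, augmented by two families of gadgets. Attached to each intended cop vertex $c_i^*$ is a \emph{cop-forcing} gadget $C_i$ whose structure guarantees that if the opening cop placement leaves $C_i$ uncovered, then a robber placed inside $C_i$ can evade capture forever; collectively these gadgets force $k$ of the $k'$ cops to the vertices $c_1^*,\ldots,c_k^*$. Attached to $r^*$ is a \emph{robber-forcing} gadget $R$ ensuring that the robber's only viable opening response is to start at $r^*$ (or a twin vertex with the same neighborhood in $H$); from any other vertex of $G$, the surplus $k'-k$ cops can sweep the robber up. After the opening, play on $G$ must simulate play on $H$ from the prescribed positions.

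The main obstacle is the simultaneous design of these forcing gadgets. Each must be strong enough to compel the intended opening move yet weak enough that, once the opening is set, the cops inside cannot be safely lured away and the robber cannot use a gadget as a long-term refuge against the simulated $H$-strategy. The gadgets must not interact with one another or with the embedded copy of $H$ in ways that grant either player an unintended strategy---for instance, a cop stationed in $C_i$ should not double as an effective pursuer in $H$, and the robber should not exploit $R$ to replenish options mid-game. Handling all these constraints requires a careful case analysis of every suboptimal opening and of every play trajectory that enters a gadget; this analysis is the technical heart of the reduction. Once it is complete, the equivalence $c(G) \le k'$ iff $k$ cops win on $H$ from the prescribed positions, combined with the polynomial-time computability of the construction, establishes \class{EXPTIME}-hardness and thus proves Conjecture~\ref{conj:main}.
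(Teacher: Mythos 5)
Your proposal correctly observes that only hardness is at issue and that the free choice of initial positions is the crux, but it does not actually contain a proof: the entire technical content---the construction of the ``cop-forcing'' and ``robber-forcing'' gadgets and the verification that they neither leak strategies into the embedded copy of $H$ nor serve as long-term refuges---is deferred. That deferral is not a minor omission; it is precisely the obstacle that Goldstein and Reingold could not overcome and that kept the conjecture open for two decades. Two concrete difficulties your sketch does not resolve: (i) a cop stationed at $c_i^*$ to cover the gadget $C_i$ must eventually leave $c_i^*$ to pursue the robber in $H$, so the gadget cannot perpetually threaten, yet it must threaten strongly enough at time zero to compel the opening placement---these requirements pull in opposite directions; (ii) since the robber places himself last, your surplus cops must be able to capture him from \emph{every} starting vertex of $G$ other than $r^*$ and its twins, including arbitrary vertices of the embedded $H$, where even $k$ cops may be insufficient; nothing in the sketch explains how a bounded surplus accomplishes this without also trivializing the simulated game on $H$.

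The paper resolves the initial-position problem by a different mechanism, and the contrast is instructive. Rather than making every deviation from the intended opening immediately fatal, the construction uses a global ``reset clique'' together with Mamino's protected-edge variant \textsc{C\&Rp}: if the cops open badly, the robber can safely enter the reset clique and \emph{force the game back into the canonical initial configuration}, so a bad cop opening is merely pointless rather than punished; if the robber opens badly, the cops capture him outright. Protected edges (across which cops may travel but cannot capture) are what make this gadget analyzable, which is why the reduction is routed through \textsc{C\&Rp} and a further ``lazy'' variant \textsc{LC\&Rp} (at most one cop moves per turn) before reducing from the Alternating Boolean Formula game---not from Goldstein--Reingold's fixed-initial-position variant as you propose. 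Your route is not provably hopeless, but as written it restates the known difficulty rather than overcoming it.
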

Since its introduction, this conjecture has been one of the foremost open questions in the game of Cops and Robbers.  It has proved quite resistant to attack; it was not until 2010 that the problem was even known to be NP-hard~\cite{FGKNS10}!  More recently, Mamino~\cite{Mam13} proved \class{PSPACE}-hardness through reduction to a new variant, ``Cops and Robbers with protection'', which in turn reduces to the original game.

In this paper, we prove Conjecture~\ref{conj:main}.  Our approach is akin to Mamino's in that we introduce a new variant of the game that further hampers the cops.  We first show that this new variant reduces to Mamino's ``Cops and Robbers with protection'', which in turn reduces to \textsc{C\&R}.  We then prove \class{EXPTIME}-completeness of our new variant through reduction from a known \class{EXPTIME}-complete problem.

The paper is structured as follows.  In Section~\ref{sec:prelim}, we introduce the game of ``Lazy Cops and Robbers'' and show that it reduces to \textsc{C\&R}; this new game serves as an intermediate step in our overall reduction.  Section~\ref{sec:main} contains the proof of Conjecture~\ref{conj:main}.  Since the final reduction is rather complicated, we present and explain the construction in Section~\ref{sec:construction}, relegating the proof of correctness to Section~\ref{sec:proof}.

Throughout the paper we consider only finite, undirected graphs.  We denote the vertex set and edge set of a graph $G$ by $V(G)$ and $E(G)$, respectively.  We use $N(v)$ for the neighbourhood of a vertex $v$; to emphasize the graph $G$ under consideration, we sometimes write $N_G(v)$.  We use $L(G)$ to denote the line graph of $G$, that is, the graph having one vertex for each edge of $G$, with two vertices adjacent when the corresponding edges in $G$ share a common endpoint.  We write $[n]$ as shorthand for $\{1, 2, \ldots, n\}$.  For more notation and background in graph theory, we direct the reader to~\cite{Wes01}.
\end{section}

\begin{section}{Preliminaries}\label{sec:prelim}

In this section, we introduce the game of ``Lazy Cops and Robbers'' and show that its decision problem reduces to \textsc{C\&R}.  Thus in Section~\ref{sec:main} we may work with this new game instead of the original, which greatly simplifies the arguments.  

As an intermediate step, we make use of the game of ``Cops and Robbers with protection'' introduced by Mamino~\cite{Mam13}.  This variant differs from ordinary Cops and Robbers in two ways.  First, each edge of $G$ is designated (in advance, as part of the input) either {\em protected} or {\em unprotected}.  Second, the cops win in the variant only if a cop comes to occupy the same vertex as the robber by traveling along an unprotected edge.  (The cops may still travel across protected edges, but cannot capture across them.)  In particular, the robber may move to a vertex containing a cop without immediately losing the game; in fact, he may even choose to start the game on the same vertex as a cop.  We denote the decision problem associated with this game by \textsc{C\&Rp}.

We adopt the convention that all input graphs for \textsc{C\&Rp} are reflexive (that is, every vertex has a loop).  We may do this without loss of generality, since a vertex without a loop is functionally equivalent to a vertex with a protected loop.  We refer to a vertex with a protected loop as a {\em protected vertex} and to a vertex with an unprotected loop as an {\em unprotected vertex}.  Note that if the robber moves to an unprotected vertex on which a cop is already present, then that cop may capture the robber on her next turn by traversing the loop at that vertex.

Mamino showed that it suffices to consider \textsc{C\&Rp} in place of \textsc{C\&R}.  More precisely, he proved the following:

\begin{lemma}[\cite{Mam13}, Lemma 3.1]\label{lem:mamino}
\textsc{C\&Rp} is \class{LOGSPACE}-reducible to \textsc{C\&R}.
\end{lemma}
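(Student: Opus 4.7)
The plan is to produce, from each instance $(G, k)$ of \textsc{C\&Rp}, an instance $(G', k')$ of \textsc{C\&R} such that $c(G') \le k'$ if and only if $k$ cops have a winning strategy on $G$ in \textsc{C\&Rp}. The reduction should act locally, replacing each protected edge and each protected loop by a fixed small gadget; the logspace bound is then immediate, and the substance of the proof is the game equivalence.

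The core of the construction is a gadget $H_{uv}$ replacing each protected edge $uv$, with $u$ and $v$ as its attachment points. The gadget must meet two competing requirements. First, a cop should be able to traverse from $u$ to $v$ through $H_{uv}$ in the same number of turns as a single step along the protected edge would take in $G$. Second, no cop moving through $H_{uv}$ should be able to capture the robber at $u$ or $v$ directly from the gadget, mirroring the no-capture-across-protected-edges rule of \textsc{C\&Rp}. A plausible gadget attaches one or two auxiliary vertices together with a ``decoy'' leaf, arranged so that whenever a cop commits to crossing, the robber can simultaneously slip to a safe gadget vertex before the cop arrives at the far endpoint; protected loops are handled by an analogous local gadget. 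One sets $k' = k$, or $k$ plus a constant number of extra cops per gadget if these are needed to enforce the intended behaviour of the decoys.

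Correctness reduces to two step-for-step simulations. In the forward direction, a winning cop strategy in \textsc{C\&Rp} on $(G,k)$ lifts to $(G',k')$ by leaving moves along unprotected edges unchanged and translating moves along protected edges into the prescribed traversal of $H_{uv}$; since captures in \textsc{C\&Rp} occur only across unprotected edges, they transfer verbatim to legitimate \textsc{C\&R} captures in $G'$. In the reverse direction, a winning robber strategy in \textsc{C\&Rp} is extended to $G'$ by maintaining a shadow position in $V(G)$: when a cop enters $H_{uv}$ the robber treats this as a cop beginning to cross the protected edge $uv$ in $G$ and invokes the decoy to dodge capture at the far endpoint. The main obstacle is calibrating the gadget delicately enough that neither player gains spurious options: no cop strategy should exploit internal gadget vertices to corner the robber without an analogue in $G$, and no robber strategy should be able to hide indefinitely inside a gadget. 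The reflexive-loop convention on $G$, which equips every vertex with a legitimate ``stay put'' move in both games, is essential in ruling out the timing pathologies that would otherwise threaten the simulations.
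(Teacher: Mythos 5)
Your proposal attacks the reduction with local, per-edge gadgets, whereas the paper simply cites Mamino's Lemma 3.1 and sketches his construction, which is global: every \emph{vertex} of $G$ is replaced by many copies, unprotected edges of $G$ become all edges between the corresponding copy-classes, and protected edges are wired up according to a preselected auxiliary graph $H$ on which a robber can evade $k$ cops forever. This difference is not cosmetic; the local-gadget approach has a genuine gap. The difficulty in simulating \textsc{C\&Rp} inside \textsc{C\&R} is concentrated at the \emph{endpoints}, not along the edges: a cop who crosses a protected edge $uv$ in \textsc{C\&Rp} really does arrive at $v$ in one step and, once there, retains full capturing power along the unprotected edges out of $v$ --- she merely cannot capture a robber sitting at $v$ at the moment of arrival (or ever, across that edge). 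In plain \textsc{C\&R} there is no way for a cop to occupy ``$v$'' while a robber also occupies ``$v$'' without the game ending. So the vertex $v$ itself must be split into many interchangeable copies, with the robber free to sit on a copy the arriving cop cannot reach; and since the robber gets only one move per round, he cannot both execute his simulated move in $G$ and separately ``slip to a safe gadget vertex'' as your decoy scheme requires. A single $G'$-move must simultaneously encode the robber's $G$-move and his dodge within the copy-cloud, which is exactly what Mamino's product-with-$H$ wiring achieves and what a one-or-two-vertex gadget per protected edge cannot. Your scheme also breaks when several cops approach $v$ through different protected edges in the same round (one robber move cannot dodge into several gadgets), and the fallback of adding ``a constant number of extra cops per gadget'' is fatal to the robber-wins direction, since those extra cops are not confined to their gadgets and can be redeployed to hunt the robber, so $c(G')\le k'$ no longer certifies a $k$-cop win in \textsc{C\&Rp}.

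The choice of the auxiliary graph $H$ is also not a detail one can wave at: it must have cop number exceeding $k$ (so it depends on the input parameter $k$, not just on a fixed finite pattern), and it must be producible in logarithmic space. Any correct repair of your approach will be driven back toward vertex duplication governed by such an $H$, i.e., toward Mamino's construction.
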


To prove Lemma~\ref{lem:mamino}, Mamino provides an construction that, given an instance $(G,k)$ of \textsc{C\&Rp}, produces an instance $(G',k)$ of \textsc{C\&R} that is ``equivalent'' in the sense that the cops have a winning strategy for the \textsc{C\&Rp} game on $G$ if and only if they also have a winning strategy for the \textsc{C\&R} game on $G'$.  Loosely, the graph $G'$ is produced by making many copies of each vertex of $G$, then adding to $G'$ all edges that correspond to unprotected edges in $G$, along with some (but not all) edges corresponding to protected edges.  This latter class of edges must be chosen very carefully; the intent is to provide the mobility offered by the protected edges of $G$, without introducing any ``new'' winning strategies for the cops.  To accomplish this, Mamino adds edges corresponding to some preselected graph $H$ on which the robber can always evade the cops; thus the robber can always navigate these new edges in such a way that cops traveling along such edges cannot capture him.

We aim to show that the decision problem for Lazy Cops and Robbers reduces to \textsc{C\&Rp}.  In fact we use a more general variant, {\em Lazy Cops and Robbers with protection}, whose decision problem we denote by \textsc{LC\&Rp}.  The rules of \textsc{LC\&Rp} are the same as in \textsc{C\&Rp}, except that on each turn, at most one cop may move to a new vertex.  (The cops may collectively decide who moves.)  This change makes \textsc{LC\&Rp} substantially easier to work with than \text{C\&Rp}.  Thus we would like to focus on \textsc{LC\&Rp} instead of \textsc{C\&Rp}; our next result shows that we can do just that.

Throughout the paper, we say that a cop {\em defends} a vertex $v$ when she occupies an adjacent vertex $u$ such that $uv$ is unprotected; that is, she could capture a robber at $v$ on her next turn.  We say that the robber can {\em safely} move to an adjacent vertex $v$ when he may move there without immediately being captured, that is, when $v$ is undefended.  When a robber has only one move that prevents capture on the next turn, we say that he {\em must} make that move; likewise, when a move would result in immediate capture, we say that he {\em must not} make it.  Similarly, we say that the cops {\em must not} make any move that clearly does not benefit them, and {\em must} make a move when no others provide any benefit.  (We make this notion more precise where it is used.)

\begin{theorem}\label{thm:lcrdp}
\textsc{LC\&Rp} reduces to \textsc{C\&Rp} in polynomial time.
\end{theorem}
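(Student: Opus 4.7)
The plan is to exhibit a polynomial-time computable map sending each instance $(G,k)$ of \textsc{LC\&Rp} to an instance $(G',k')$ of \textsc{C\&Rp} such that $k$ lazy cops win on $G$ if and only if $k'$ ordinary cops win on $G'$. One direction is essentially free: any lazy winning strategy on $G$ lifts to a non-lazy winning strategy by simply having the idle cops stay in place (staying in place is always permitted in \textsc{C\&Rp}). The nontrivial direction is to ensure that any winning non-lazy strategy on $G'$ must, in effect, behave lazily with respect to the underlying $G$; equivalently, the robber on $G'$ must be able to punish any attempt by the cops to make more than one substantive move per round.

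To achieve this, I would augment $G$ with auxiliary structures --- call them \emph{delay gadgets} --- interfacing with $G$ via protected edges (which afford cop mobility without capture). The gadgets should be designed so that in any round in which two or more cops make substantive moves on the $G$-side, the combined state in the gadgets offers the robber a safe escape, whereas if the cops act one at a time the robber has no such response. One natural realization is to attach to each vertex $v \in V(G)$ a small ``holding area'' of copies of $v$ (a protected clique, say) and route every actual movement out of $v$ through this area, so that the local parity of cop departures and arrivals in each holding area is exposed to the robber and any ``double-move'' in a single round manifests as an exploitable asymmetry across a corresponding unprotected edge.

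After presenting the construction, I would verify both directions. For the forward direction, I translate a lazy winning strategy on $G$ into a strategy on $G'$ by having each cop shadow her counterpart through the holding areas, performing a substantive move only when her counterpart in the lazy game moves. For the reverse direction, I design an explicit robber response on $G$ that simulates, round by round, the behaviour of an optimal \textsc{C\&Rp} robber on $G'$: whenever the $G'$-cops make more than one substantive move in a round, the simulated robber follows the escape route provided by the delay gadgets, so the presumed winning strategy on $G'$ in fact implements only lazy moves (up to harmless shuffling inside holding areas) and therefore translates to a winning lazy strategy on $G$.

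The main obstacle is designing the delay gadgets delicately enough that they simultaneously (a) truly penalize any round in which more than one cop moves substantively, (b) do not endow the cops with any \emph{new} winning strategies beyond those implementable lazily on $G$, and (c) do not grant the robber any evasion capability that he lacks in the original lazy game. Balancing these three constraints --- particularly tuning the interplay between the protected edges used for cop mobility inside the gadgets and the unprotected edges through which captures must occur --- is the technical crux of the argument, and is what makes the construction, rather than the verification, the heart of the proof.
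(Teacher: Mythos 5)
Your high-level framing is right --- the reduction must hamper the cops so that any winning \textsc{C\&Rp} strategy on $G'$ is forced to be ``lazy'' with respect to $G$, and the construction rather than the verification is the crux --- but the proposal stops exactly where the proof has to begin, and the one concrete gadget you do sketch would not work. A ``holding area'' attached locally to each vertex $v$, exposing ``the local parity of cop departures and arrivals'' at $v$, cannot detect the event you need to punish: two \emph{different} cops making substantive moves at two \emph{different} vertices in the same round. Each holding area sees at most one departure, so locally everything looks legal; the violation is only visible globally. The paper's mechanism is a global pigeonhole. It indexes the $k$ cops' positions from $1$ to $k$ (one $K_k$ ``branch clique'' per vertex of $G$, plus subdivision vertices on each edge serving as transit points, all of which carry index $1$), and adds a single protected \emph{reset clique} $\{r_1,\dots,r_k\}$ with an unprotected edge from $r_i$ to every $i$-vertex. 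Defending the whole reset clique requires one cop on an $i$-vertex for each $i\in[k]$; since every transit (subdivision) vertex is a $1$-vertex, two cops in transit simultaneously would leave some $r_j$ undefended and hand the robber a permanent escape. That is the laziness enforcement, and nothing in your proposal supplies an equivalent.

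Two further ingredients are absent and are not routine. First, the robber's moves must be synchronized with the cops' two-phase transits: the paper gives the robber a copy of $G$ \emph{and} a copy of the line graph $L(G)$, with subdivision vertices dominating the $G$-copy and $1$-vertices dominating the $L(G)$-copy, so that each half-step of a cop transit forces exactly one half-step of the robber, and the robber's choice of line-graph vertex encodes his response in the simulated lazy game. Your proposal keeps the robber on $G$ itself and gives no mechanism tying his tempo to the cops'. Second, \textsc{LC\&Rp} and \textsc{C\&Rp} both let the players choose initial positions, so the construction must also force the game into a canonical starting configuration; the reset clique does this double duty in the paper (any deviant cop placement leaves some $r_j$ open), and your proposal does not address initial positions at all. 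As written, then, the proposal is a correct statement of the design constraints (your (a)--(c)) together with an acknowledgement that satisfying them is hard; the actual solution --- indexed cliques, a single global reset clique, and the $G$-versus-$L(G)$ alternation --- is missing, and the local-parity substitute you offer fails constraint (a).
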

\begin{proof}
Given an instance $(G,k)$ of \textsc{LC\&Rp}, we produce a graph $G'$ such that $k$ cops can capture a robber on $G$ in the \textsc{LC\&Rp} game if and only if $k$ cops can capture a robber on $G'$ in the \textsc{C\&Rp} game.  We may assume $k \ge 2$, since otherwise the reduction is trivial (simply take $G' = G$).

All vertices and edges of $G'$ are unprotected except where otherwise specified.  We begin with three disjoint subgraphs: $G_C$, $G_R$, and $L_R$.  The graph $G_R$ is isomorphic to $G$, and $L_R$ is isomorphic to $L(G)$.  To form $G_C$, we begin with $G$ and subdivide each edge once; we refer to the original vertices of $G$ as {\em branch vertices} and the vertices arising from subdivision as {\em subdivision vertices}.  (We adopt the convention that when a loop is subdivided, the subdivision vertex becomes a leaf adjacent to the relevant branch vertex; that is, we avoid producing a double-edge.)  Next we replace each branch vertex $v$ with a copy of $K_k$, each vertex of which we make adjacent to all neighbours of $v$; we refer to the copy of $K_k$ as the {\em branch clique} for $v$.  All vertices and edges within $G_C$, $G_R$, and $L_R$ are unprotected.

For a vertex $v$ and edge $e$ in $G$, we denote the branch clique for $v$ by $\lambda_C(v)$, the subdivision vertex corresponding to $e$ by $\lambda'_C(e)$, the vertex in $G_R$ corresponding to $v$ by $\lambda_R(v)$, and the vertex of $L_R$ corresponding to $e$ by $\lambda'_R(e)$.  We index the vertices in each branch clique from $1$ to $k$ arbitrarily; we refer to those vertices with index 1 as {\em 1-vertices}, those with index 2 as {\em 2-vertices}, and so on.  For reasons that will become clear later, we additionally consider subdivision vertices to be 1-vertices.

We now add edges joining the subgraphs.  For each edge $uv$ in $G$ (including loops), we add edges from $\lambda'_R(uv)$ to both $\lambda_R(u)$ and $\lambda_R(v)$.  Additionally, we add edges from every vertex in $\lambda_C(u)$ to $\lambda_R(v)$ and from every vertex in $\lambda_C(v)$ to $\lambda_R(u)$; these edges are protected if and only if $uv$ is protected in $G$.  We also add edges joining all subdivision vertices to all vertices in $G_R$, as well as edges joining all 1-vertices in branch cliques to all vertices of $L_R$.

Next we add a gadget to help simulate the constraint that only one cop may move at a time and, additionally, to enforce the desired initial positions.  We add new vertices $r_1, r_2, \ldots, r_k$, all protected.  We also add all edges of the form $r_ir_j$, all protected.  Next, for each $i \in [k]$, we add (unprotected) edges joining $r_i$ to all $i$-vertices in $G_C$.  Finally, we add protected edges joining each $r_i$ to all vertices in $G_R$ and $L_R$.  We refer to $\{r_1, r_2, \ldots, r_k\}$ as the {\em reset clique}.  As we show later, should the cops ``misbehave'', the robber can use the reset clique to return the game to a ``proper'' configuration.  Likewise, should the robber misbehave, the cops can use the reset clique to capture him.

For technical reasons, we add one final vertex $\omega$ and edges from $\omega$ to all vertices in $G_C$, $G_R$, and $L_R$.  Figure~\ref{fig:lcrdp} shows an example of the construction for $k=2$.  The original graph $G$ appears on the left, and the new graph $G'$ appears on the right.  In the figure, black vertices are unprotected, white vertices are protected, solid edges are unprotected, and dashed edges are protected.  

\begin{figure}[h]
\begin{center}
\includegraphics{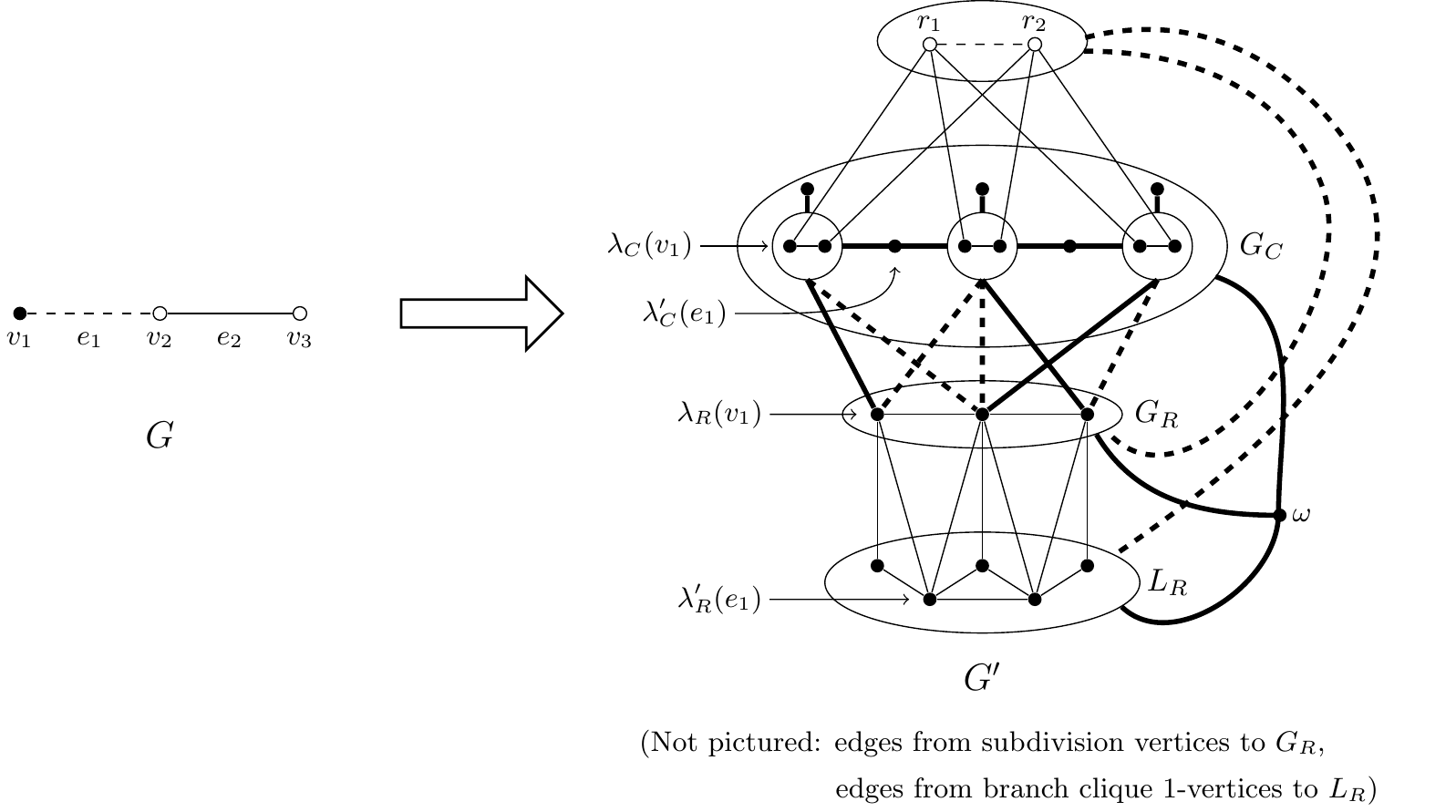}
\end{center}
\caption{. The construction in Theorem~\ref{thm:lcrdp} (for $k=2$).}\label{fig:lcrdp}
\end{figure}

We claim that $k$ cops can capture a robber on $G$ in the \textsc{LC\&Rp} game on $G$ if and only if $k$ cops can capture a robber on $G'$ in the \textsc{C\&Rp} game.  Before presenting specific cop and robber strategies, we make some useful observations about the game on $G'$.  Throughout, we refer to vertices in $G_C$ as {\em cop vertices} and to those in $G_R$ and $L_R$ as {\em robber vertices}. Let a {\em basic configuration} be any configuration of the game on $G'$ in which the robber occupies a robber vertex and all cops occupy cop vertices, with exactly one cop on an $i$-vertex for all $i \in [k]$.


\medskip\medskip
\noindent {\bf Claim 1}: If there is a cop on an $i$-vertex for all $i \in [k]$, it is the cops' turn, and the robber occupies a cop vertex, then the cops can capture the robber.\\

Suppose the robber occupies some cop vertex $v$, and let $v$ be a $j$-vertex.  To capture the robber, the cop currently on a $j$-vertex remains in place, while some other cop moves to $\omega$.  On the robber's subsequent turn, he may only reach cop vertices, robber vertices, $\omega$, and $r_j$.  However, the cop on a $j$-vertex defends $r_j$, while the cop on $\omega$ defends $\omega$ itself, all cop vertices, and all robber vertices.

\medskip\medskip
\noindent {\bf Claim 2}: If it is the robber's turn, the robber occupies a robber vertex, and the game is not in a basic configuration, then the robber can safely move to the reset clique.\\

For some $j$, no cop occupies a $j$-vertex.  Hence $r_j$ is undefended, so the robber can safely move there.

\medskip\medskip
\noindent {\bf Claim 3}: If the robber stands in the reset clique and it is the robber's turn, then the robber can force the game to (eventually) return to a basic configuration in which it is the cops' turn and the robber occupies any robber vertex he chooses.\\

If some vertex in the reset clique is undefended, then the robber can move (or remain) there, repeating until the cops defend all vertices in the reset clique.  This requires a cop on an $i$-vertex for all $i \in [k]$, so at this point the robber may produce a basic configuration by moving to any robber vertex.

\medskip\medskip
\noindent {\bf Claim 4}: At the beginning of the game, either the cops or the robber may force the game to (eventually) reach a basic configuration in which it is the cops' turn and the robber occupies any robber vertex he chooses.\\

The cops need simply place one cop on an $i$-vertex for each $i \in [k]$.  This defends all vertices in the reset clique, along with $\omega$.  By Claim 1 the robber must not start on a cop vertex, so he must start in either $G_R$ or $L_R$, resulting in a basic configuration.

If the cops do not start in such a configuration, then some vertex in the reset clique is undefended, and the robber can safely start there.  From there, he can force the game into a basic configuration by Claim 3.

\medskip\medskip
\item {\bf Claim 5}: From a basic configuration, the cops and robber must not move to $\omega$.\\

By Claim 3, if the robber ever safely reaches the reset clique, then he can force the game into a basic configuration; this does not help the cops, since by Claim 4) they could have started the game in this configuration to begin with.  Hence the cops must not make any moves that allow the robber safe access to the reset clique.  With a cop on $\omega$ the game is no longer in a basic configuration, and by Claim 2 the robber can move to the reset clique.  Should the robber move to $\omega$ from a basic configuration, any cop can capture him.\\

\medskip\medskip
By Claim 4, either player can initially force the game into a basic configuration.  By Claims 1, 2, 3, and 5, along with the observation that in a basic configuration the cops defend all vertices of the reset clique, it follows that the game must remain in a basic configuration until some cop can capture the robber (and thus end the game) on her next move.  We are now ready to present the cop and robber strategies.\\

Suppose $k$ cops can win the \textsc{LC\&Rp} game on $G$; we show how $k$ cops can win the \textsc{C\&Rp} game on $G'$.  In addition to the \textsc{C\&Rp} game on $G'$, the cops ``imagine'' an instance of the \textsc{LC\&Rp} game on $G$ and play both games simultaneously, using a winning strategy for the \textsc{LC\&Rp} game to guide their play in the \textsc{C\&Rp} game.

Label the cops in the \textsc{LC\&Rp} game $C_1, C_2, \ldots, C_k$, and label those in the \textsc{C\&Rp} game $C'_1, C'_2, \ldots, C'_k$.  For each $i \in [k]$, let $v_i$ denote the starting vertex of cop $C_i$ in the \textsc{LC\&Rp} game; in the \textsc{C\&Rp} game, cop $C'_i$ begins on the $i$-vertex in $\lambda_C(v_i)$.  As argued in Claim 4, the robber must begin on a robber vertex.  Since cop $C'_1$ defends all vertices in $L_R$, the robber must begin on $\lambda_R(v)$ for some $v \in V(G)$.  The cops now pretend that the robber in the \textsc{LC\&Rp} game started on $v$. 

While playing the games, the cops maintain the following invariant: if the cops in the \textsc{LC\&Rp} game occupy $w_1, w_2, \ldots, w_k$ and the robber occupies $w_R$, then in the \textsc{C\&Rp} remains in a basic configuration, with one cop occupying a vertex of each $\lambda_C(w_i)$ and the robber occupying $\lambda_R(w_R)$.  We show how the cops can maintain this invariant while mimicking, in the \textsc{C\&Rp} game, their winning strategy for the \textsc{LC\&Rp} game.  

Suppose the cop strategy in the \textsc{LC\&Rp} game calls for cop $C$ to move from vertex $w$ to vertex $x$.  In the \textsc{C\&Rp} game, this requires two steps.  By the invariant, some cop $C'$ currently sits on some $j$-vertex $w'$ in $\lambda_C(w)$.  The cop $C'$ moves to the subdivision vertex corresponding to edge $wx$; if $j \not = 1$, then additionally the cop currently sitting on a 1-vertex moves to the $j$-vertex in her branch clique.  The cops now defend all of $G_R$.  By Claims 1 and 5, the robber must move to $L_R$; say he moves from vertex $\lambda_R(y)$ in $G_R$ to $\lambda'_R(yz)$ in $L_R$.  Cop $C'$ now moves to the 1-vertex in $\lambda_C(x)$, which defends all vertices in $L_R$ and forces the robber to move either to $\lambda_R(y)$ or $\lambda_R(z)$.  The cops interpret these actions as moves in the \textsc{LC\&Rp} game: the cops move a cop from $w$ to $x$, while the robber either remains on $y$ or moves to $z$, as appropriate.  (Note that we may have $w = x$, $y = z$, or both.)  In either case, the invariant is maintained.

Eventually the \textsc{LC\&Rp} game reaches a configuration in which the cops can capture the robber, say by moving a cop from $w$ to $x$.  By the invariant, some cop $C$ sits in  $\lambda_C(w)$, while the robber occupies $\lambda_R(x)$.  Now $C$ moves to $\lambda_R(x)$; since $wx$ is necessarily unprotected in $G$, all edges from $\lambda_C(w)$ to $\lambda_R(x)$ are unprotected in $G'$, hence the cops win.\\

Suppose now that the robber can win the \textsc{LC\&Rp} game on $G$; we show how he can win the \textsc{C\&Rp} game on $G'$.  As above, the robber imagines an instance of the \textsc{LC\&Rp} game on $G$, plays both games simultaneously, and uses a winning strategy for the \textsc{LC\&Rp} game to win the \textsc{C\&Rp} game.  By Claim 4, we may suppose without loss of generality that the game begins in a basic configuration.  There are two possibilities: either the cop on a 1-vertex starts in a branch clique, or she occupies subdivision vertex.  We first suppose the former, and afterward explain how the robber may deal with the latter.

Label the cops $C_1, C_2, \ldots, C_k$.  Suppose cop $C_i$ occupies vertex $w_i$, where $w_i \in \lambda_C(v_i)$.  The robber proceeds as if the cops in the \textsc{LC\&Rp} game began on $v_1, v_2, \ldots, v_k$.  Let $v$ be his starting position in that game; in the \textsc{C\&Rp} game, he starts on $\lambda_R(v)$.  As before we maintain the invariant that if the cops in the \textsc{LC\&Rp} game occupy $w_1, w_2, \ldots, w_k$ and the robber occupies $w_R$, then one cop in the \textsc{C\&Rp} game occupies a vertex of each $\lambda_C(w_i)$, and the robber occupies $\lambda_R(w_R)$.  As a consequence of the invariant and the construction on $G'$, if the robber is not currently threatened in the \textsc{LC\&Rp} game, then also he is not threatened in the \textsc{C\&Rp} game.  

Suppose first that no cops occupy subdivision vertices.  Maintaining a basic configuration requires at most one cop on a subdivision vertex, so in each round at most one cop can leave her branch clique.  If no cops leave their cliques, then the robber remains still.  Suppose instead that cop $C$, currently on some vertex of $\lambda_C(w)$, leaves her branch clique, while all other cops remain in theirs.  By Claim 2, $C$ either captures the robber or moves to a subdivision vertex corresponding to some edge $wx$.  Since the robber is not threatened in the \textsc{LC\&Rp} game the former is impossible, so we may suppose the latter.  The robber interprets this move as $C$ signaling her intent to move to $x$ in the \textsc{LC\&Rp} game.  Let the robber's current vertex be $\lambda_R(y)$.  In the \textsc{LC\&Rp} game, the robber sits on $y$; say his strategy tells him that, should the cop on $w$ move to $x$, the robber should respond by moving to $z$.  In the \textsc{C\&Rp} game, the robber now moves ``halfway'' to $\lambda_R(z)$ by moving to $\lambda'_R(yz)$.  (As with the cops' strategy, we could have $w=x$, $y=z$, or both.)  There are four possibilities.
\begin{enumerate}
\item {\em If $C$ does not move on the next turn}, then no other cops may leave their branch cliques, the robber sits still, and the \textsc{LC\&Rp} game remains unchanged.\\
\item {\em If $C$ moves back to $\lambda_C(w)$ and no other cop leaves her branch clique}, then the robber moves back to $\lambda_R(y)$, and the \textsc{LC\&Rp} game remains unchanged.\\
\item {\em If $C$ moves to $\lambda_C(x)$ and no other cop leaves her branch clique}, then the robber moves to $\lambda_R(z)$ and advances the \textsc{LC\&Rp} game by moving a cop from $w$ to $x$ and, in response, moving the robber from $y$ to $z$.\\
\item {\em If $C$ moves to $\lambda_C(w)$ or $\lambda_C(x)$ and some other cop $C'$ leaves her branch clique}, then the robber treats these as two separate moves in the following sense.  First, he pretends that $C$ moves while $C'$ remains still, decides where to move in response, and advances the \textsc{LC\&Rp} game accordingly.  After that, he pretends that $C'$ makes her move, and decides which vertex $\lambda'_R(e)$ of $L_R$ he would move to in response.  The robber then moves directly to $\lambda'_R(e)$; this is possible because $yz$ and $e$ must share an endpoint in $G$, hence the vertices $\lambda'_R(yz)$ and $\lambda'_R(e)$ are adjacent in $L_R$.
\end{enumerate}

As argued above, the cops cannot deviate from this formula unless some cop can capture the robber by moving to $G_R$.  However, by the invariant, the cops threaten to capture the robber in the \textsc{C\&Rp} game if and only if they threaten to capture the robber in the \textsc{LC\&Rp} game.  Since the robber avoids capture indefinitely in the \textsc{LC\&Rp} game, he thus avoids capture indefinitely in the \textsc{C\&Rp} game.

Finally, suppose one cop starts on the subdivision vertex corresponding to some edge $wx$.  The robber pretends that this cop had started on $w$ and, subsequently, moved from $\lambda_C(w)$ to $\lambda'_C(wx)$.  The robber chooses a starting position and first move based on this hypothetical scenario, takes note of the vertex $v$ at which he winds up, and starts directly on $v$ in the ``real'' \textsc{C\&Rp} game.\\

It is clear that $G'$ may be constructed in polynomial time, which completes the proof.
\end{proof}  
\end{section}

\begin{section}{Proof of the main result}\label{sec:main}

We are now ready to prove Conjecture~\ref{conj:main}.  To do so, it suffices to reduce a known \class{EXPTIME}-complete problem to \textsc{LC\&Rp}; Conjecture~\ref{conj:main} then follows by Lemma~\ref{lem:mamino} and Theorem~\ref{thm:lcrdp}.  We use the problem known as the {\em Alternating Boolean Formula} game (or {\em \textsc{ABF}}), shown to be \class{EXPTIME}-complete by Stockmeyer and Chandra~\cite{SC79}\footnote{In fact, Stockmeyer and Chandra showed that ABF is complete for the complexity class ETIME, the class of problems solvable in time $2^{O(n)}$.  However, it is well-known that every ETIME-complete problem is also EXPTIME-complete; see for example~\cite{Hom97}, Theorem 1.1.}.  In \textsc{ABF}, players A and B are given disjoint sets $X$ and $Y$ of variables, with prescribed initial values, along with a boolean formula $\varphi$ in conjunctive normal form.  The players play alternately, each changing the values of at most one of their variables.  Player A wins if $\varphi$ ever becomes true; otherwise, B wins.  The problem is to decide whether Player A has a winning strategy.

Our reduction from \textsc{ABF} to \textsc{LC\&Rp} is somewhat technical.  For this reason, we begin by presenting the construction and attempting to explain the intuition behind the reduction.  Only after that do we provide the formal proof.

\begin{subsection}{Construction}\label{sec:construction}

In reducing \textsc{ABF} to \textsc{LC\&Rp}, we must convert an instance $(X,Y,\varphi)$ of \textsc{ABF} into an instance $(G,\ell)$ of \textsc{LC\&Rp} such that Player A can win the ABF game if and only if $\ell$ cops can capture a robber on the graph $G$ (with specified protected edges).  The main difficulty, of course, is constructing $G$.  In this subsection we present the construction in small pieces.  Our aim is to provide intuition behind the construction, and as such we make several unjustified assertions, postponing formal proof until Section~\ref{sec:proof}.\\

In the construction below, to simplify the presentation, we incorporate a special vertex $h$.  All edges incident to $h$ are protected.  Thus, should the robber ever reach $h$, he may stay there indefinitely, thereby winning the game.  The concept of a ``winning vertex'' for the robber appeared both in the work of Goldstein and Reingold~\cite{GR95} (where such vertices were called ``holes'') and in that of Mamino~\cite{Mam13} (where they were called ``safe havens'').  Such a contrivance presents a fundamental difficulty: what is to stop the robber from simply starting the game at $h$?  Goldstein and Reingold dealt with this issue by explicitly specifying the players' initial positions, thereby specializing the game.  We adopt an approach more similar to Mamino's, wherein we later replace $h$ with a gadget that allows the robber to force the game back into a canonical ``initial'' configuration (as with the ``reset clique'' in the proof of Theorem~\ref{thm:lcrdp}).  

Every edge incident to $h$ is assumed to be protected, including the loop at $h$.  All other edges in $G$ are unprotected except where otherwise specified.  In the figures below, black vertices are unprotected, white vertices are protected, solid edges are unprotected, and dashed edges are protected.  A slashed circle denotes vertex $h$.\\

Let the given \textsc{ABF} instance have variables $X = \{x_1, x_2, \ldots, x_m\}$ and $Y = \{y_1, y_2, \ldots, y_n\}$ and CNF formula $\varphi$.  We aim to construct an ``equivalent'' instance of the \textsc{LC\&Rp} game in which the cops play the role of Player A, while the robber plays the role of Player B.  We encode the values of variables in both $X$ and $Y$ through the positions of cops in $G$.  For $i \in [m]$, the graph $G$ has vertices $x_i^T$ and $x_i^F$; we will ensure that at all times exactly one of these vertices is occupied by a cop.  A cop on $x_i^T$ indicates that variable $x_i$ is currently true, while a cop on $x_i^F$ indicates that $x_i$ is currently false.  Likewise, for each $j \in [n]$, the graph $G$ has vertices $y_j^T$ and $y_j^F$, which encode the truth value of $y_j$.  We refer to the vertices $x_i^T$, $x_i^F$, $y_j^T$, and $y_j^F$ as {\em variable vertices}.  We call the cops residing on these vertices {\em variable cops}, and say that the cop encoding the value of some variable is {\em assigned to} that variable.

We refer to the negation of some variable, through the appropriate movement of the corresponding cop, as a {\em shift} in that variable.  Our construction must provide means for the cops to force shifts in the $x_i$ and for the robber to force shifts in the $y_j$.  The former is straightforward: for each $i \in [m]$ we add an edge joining $x_i^T$ and $x_i^F$, and the cops may shift $x_i$ simply by moving the appropriate cop from one of these vertices to the other. 

Empowering the robber to force a shift in some $y_j$ is less straightforward.  We do this by employing the gadget shown on the left in Figure~\ref{fig:abf_gadgets}, adding one copy of the gadget for each $y_j$.  The vertices $v$ and $h$ are common to all gadgets, while all other vertices shown are unique to each copy of the gadget; $v$ is in some sense the robber's ``home base'' throughout the game.  Although the gadget may look complicated, the underlying idea is simple: should the robber (sitting on $v$) want to move a cop from, say, $y_j^T$ to $y_j^F$, he does so by himself moving to $a_j^F$.  This initiates a sequence of moves during which the cop must repeatedly defend against the robber's threats to reach $h$, while the robber must repeatedly evade capture by the cops.  In this way both players' moves are ``forced''; ultimately the cop must reach $y_j^F$, while the robber must return to $v$.  (The cops subsequently have the opportunity to shift some $x_i$, should they so desire.)  We refer to vertices of the form $a_j^T, a^{\prime T}_j, a_j^F$, and $a^{\prime F}_j$ as {\em primary robber vertices}, and to $b_j^T, b_j^{\prime T}, b_j^F,$ and $b_j^{\prime F}$ as {\em secondary robber vertices}.  As previously mentioned, we call $y_j^T$ and $y_j^F$ {\em variable vertices}; in addition, we call the $y_j^*$ {\em intermediate vertices}.\\

\begin{figure}[h]
\begin{center}
\includegraphics{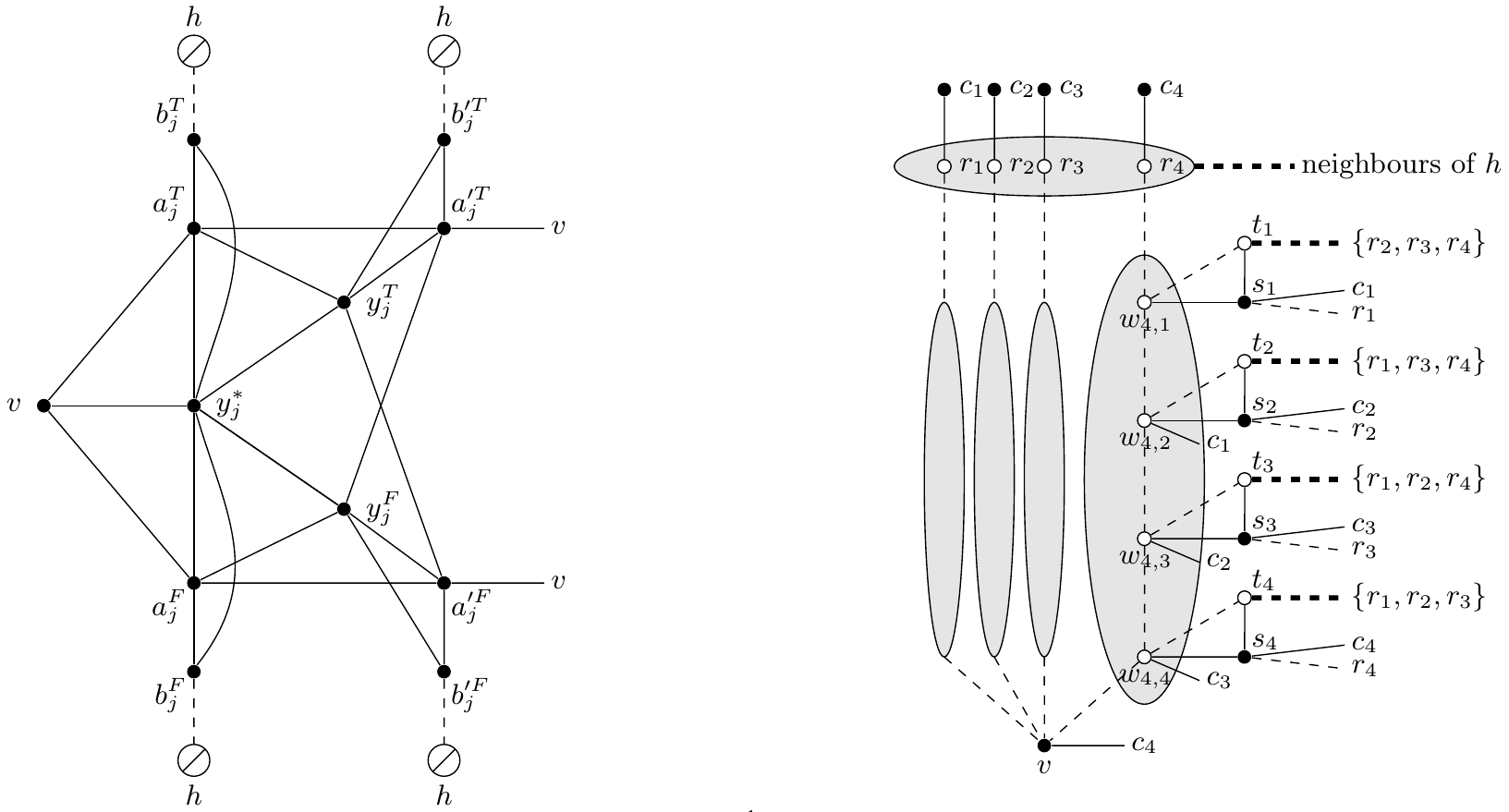}
\end{center}
\caption{. Left: $y_j$ gadget.  Right: reset gadget (with $k=4$). }\label{fig:abf_gadgets}
\end{figure}

\pagebreak

To restrict the variable cops to their assigned variable vertices, we add protected {\em restriction vertices} $u_z$ for all variables $z$.  We also add edges joining all restriction vertices to $h$ and to $v$; the edges incident to $h$ are protected.  Moreover, we add edges joining each $u_z$ to the variable vertices corresponding to $z$.  So long as all variable cops remain on their assigned variable vertices, they defend all restriction vertices.  However, should any variable cop ever leave her assigned variable vertices, the corresponding restriction vertex becomes undefended.  If at this time the robber occupies $v$ or some primary robber vertex, then he can safely move to the undefended restriction vertex and subsequently to $h$.\\  

The graph $G$ must also provide a means for the cops to capture the robber, should $\varphi$ ever become true.  To this end, we add one {\em clause vertex} $u_C$ for each clause $C$ of $\varphi$ and two additional vertices $u$ and $s$.   Vertex $u$ and all of the $u_C$ are adjacent to $h$, $s$, and $v$; all such edges incident to $h$ are protected.  For each CNF clause $C$, the vertex $u_C$ is made adjacent, in the usual way, to those variable vertices representing literals in $C$.  That is, if $C$ contains the literal $\alpha$, then we add edge $u_C\alpha^T$, and if $C$ contains the literal ${\overline \alpha}$, then we add edge $u_C\alpha^F$.  Finally, we add edges from $u$ to all robber vertices and restriction vertices, as well as edges from all clause vertices to all primary robber vertices.  

Suppose $\varphi$ becomes true after Player B's turn.  In the Cops and Robbers game, the robber sits on $v$ and it is the cops' turn.  We will ensure that, throughout the bulk of the game, a cop sits on $s$; to initiate the capture, this cop moves to $u$.  This forces the robber to flee to a clause vertex.  Should some clause of $\varphi$ be false, the robber can move to an undefended clause vertex, and subsequently to $h$.  However, should $\varphi$ be true, the robber has nowhere to run, and will be captured.  If $\varphi$ becomes true after Player A's turn, then the process works as before, except that the robber starts from some primary robber vertex.\\

We next replace the special vertex $h$ with a more complex gadget, as promised earlier.  Let $k=m+n+2$.  In place of $h$ we add unprotected vertices $c_1, c_2, \ldots, c_{k}$ and protected vertices $r_1, r_2, \ldots, r_k$; we refer to $\{r_1, r_2, \ldots, r_k\}$ as the {\em reset clique}.  We add unprotected edges $c_ir_i$ for all $i \in [k]$ and protected edges joining all pairs of vertices in the reset clique.  Finally, we replace each edge originally incident to $h$ with $k$ edges, incident to each of $r_1, r_2, \ldots, r_k$.  As all edges incident to $h$ were protected, so too are the new edges that replace them.  The key idea is that if the robber safely enters the reset clique, then the cops can only force him out by occupying $c_1, c_2, \ldots, c_k$.  We next provide means for the robber to force the cops from these vertices into particular ``starter'' vertices.

Toward this end, we add new vertices $s_1, s_2, \ldots, s_k$, $t_1, t_2, \ldots, t_k$, and $w_{i,j}$ for $i,j \in [k]$; of these, all but the $s_i$ are protected.  For all $i \in [k]$ we add edge $c_is_i$ and protected edge $r_is_i$.  We also add edges $s_it_i$ for all $i \in [k]$ and protected edges $t_ir_j$ for all $i,j \in [k]$ with $i \not = j$.  For all $i,j \in [k]$ we add edge $w_{i,j}s_j$.  We also add protected edges $w_{i,j}w_{i,j'}$ for all $i,j,j' \in [k]$, as well as protected edges $r_iw_{i,1}$ and $w_{i,k}v$ for all $i \in [k]$, except that we replace $r_1w_{1,1}$ with $r_1w_{1,2}$.  Additionally, for $i \in [k-1]$, we add edges from $c_i$ to all $w_{j,i+1}$, except that we replace $c_1w_{1,2}$ and $c_2w_{1,3}$ with $c_1w_{1,3}$ and $c_2w_{1,1}$, respectively.  Finally, we add edges joining $c_k$ to all clause vertices.  We strongly urge the reader to refer to Figure~\ref{fig:abf_gadgets}. 

Intuitively, this gadget allows either player to force a specific configuration of the game in which the robber occupies $v$ while the cops occupy $s_1, s_2, \ldots, s_k$.  We want this special configuration to be a starting point for our simulation of the \textsc{ABF} game.  Thus for $i \in [m]$, we identify $s_i$ with $x_i^T$ if $x_i$ is initially true and with $x_i^F$ otherwise.  Similarly, for $i \in [n]$ we identify $s_{m+i}$ with $y_i^T$ if $y_i$ is initially true and with $y_i^F$ otherwise.  Finally, we identify $s_{k-1}$ with $s$ and $c_{k-1}$ with $u$.

For technical reasons, it will be useful to ``forbid'' the robber from entering the variable and intermediate vertices.  To facilitate this, we add an unprotected vertex $c^*$ and a protected vertex $r^*$.  We add edges joining $c^*$ to $r^*$ and to all variable and intermediate vertices.  We also add protected edges joining $r^*$ to every other vertex in the graph (aside from $c^*$).  Clearly the cops should prevent the robber from ever reaching $r^*$.  Doing so requires having a cop occupy $c^*$ at all times; this has the side-effect of ensuring that the variable and intermediate vertices are always defended.\\

This completes the construction.  As we argue in Section~\ref{sec:proof}, the structure of $G$ greatly restricts the players' options.  Let the {\em initial configuration} be the configuration of the game in which the cops occupy $s_1, s_2, \ldots, s_k$ and $c^*$, the robber occupies $v$, and it is the cops' turn; note that the initial configuration corresponds to the starting configuration of the given \textsc{ABF} instance, in the sense that the variable cops encode their assigned vertices' given initial values.  Let a {\em basic configuration} be one in which the robber occupies $v$, one cop occupies $c^*$, one cop occupies $s_k$, one cop occupies $s$, and for each variable $z$, the variable cop assigned to $z$ occupies $z^T$ if $z$ is currently true in the \textsc{ABF} game and $z^F$ otherwise.  The reset gadget forces the game to quickly reach the initial configuration; our intuition, which we formalize in Section~\ref{sec:proof}, is that the game ``should'' regularly return to a basic configuration, provided the players play ``properly''.\\

With the game in a basic configuration, if it is the cops' turn, then the cops may force a shift in some $x_i$, which corresponds to Player A changing the value of $x_i$ in the \textsc{ABF} game.  Likewise, if it is the robber's turn, then the robber may force a shift in some $y_j$, which corresponds to Player B changing the value of $y_j$.  Both players also have the option of sitting still, which corresponds to a player changing no variables on his turn of the \textsc{ABF} game.  In any case the game returns to a basic configuration.  The game continues in this manner until the cop on $s$ moves to $u$, which results either in the capture of the robber (if $\varphi$ is true) or the escape of the robber (if $\varphi$ is false).  Thus the cops have a winning strategy on $G$ if and only if Player A has a winning strategy in the \textsc{ABF} game.  We remark that $G$ may clearly be constructed in polynomial time.

We conclude this subsection with a table summarizing important adjacencies in the construction.  (For the sake of brevity, we have omitted some vertices and edges appearing in the reset gadget; refer to Figure~\ref{fig:abf_gadgets}.)\\

{\tiny
\def\arraystretch{1.2}
\begin{table}[ht]
\centering
\begin{tabular}{|l|l|l|l|l|}
\hline

{\bf Vertices} &{\bf Unprot. edges to...} &{\bf Prot. edges to...} &{\bf Remark}\\

\hline\hline

$h$ & &$r^*$, $u$ &symbolizes the reset clique\\
 & &clause vertices &protected\\
 & &restriction vertices &\\

\hline

$v$ &$u, c_k$ &$r^*$  &\\
 &restriction vertices & &\\
 &clause vertices & &\\
 &[see also Figure~\ref{fig:abf_gadgets}] & &\\
 
\hline

$u$ &$s, v$ &$r^*$, $h$ &also known as $c_{k-1}$\\
 &robber vertices & &\\
 &restriction vertices & &\\

\hline

$s$ &$u$ &$r^*$ &also known as $s_{k-1}$\\
 &clause vertices & &\\

\hline 

$x_i^T, x_i^F$ &$c^*$, $u_{x_i}$ &$r^*$ &variable vertices\\
 &each other & &\\
 &appropriate clause vertices & &\\
 
\hline
 
$y_j^T, y_j^F$ &$c^*$, $u_{y_j}$ &$r^*$ &variable vertices\\
 &appropriate clause vertices & &\\
 &[see also Figure~\ref{fig:abf_gadgets}] & &\\
 
\hline

$y_j^*$ &$c^*$ &$r^*$ &intermediate vertices\\
 &[see also Figure~\ref{fig:abf_gadgets}] & &\\
 
\hline

$a_j^T, a_j^F, a^{\prime T}_j, a^{\prime F}_j$ &$u$ &$r^*$ &primary robber vertices\\
 &clause vertices & &\\
 &[see also Figure~\ref{fig:abf_gadgets}] & &\\
 
\hline

$b_j^T, b_j^F, b_j^{\prime T}, b_j^{\prime F}$ &$u$ &$r^*$ &secondary robber vertices\\
 &[see also Figure~\ref{fig:abf_gadgets}] & &\\
 
\hline

$u_z$ &$v, u$ &$r^*$, $h$ &restriction vertices\\
 &corresponding variable vertices & &\\
 
\hline

$u_C$ &$s, v, c_k$ &$r^*$, $h$ &clause vertices\\
 &corresponding variable vertices & &protected\\
 &primary robber vertices & &\\

\hline
\end{tabular}
\label{tab:abf_construction}
\caption{. Summary of the construction in Section~\ref{sec:construction}.}
\end{table}
}
\end{subsection}

\begin{subsection}{Reduction}\label{sec:proof}

We are now ready to prove that the \textsc{LC\&Rp} game on $G$ behaves as claimed in Section~\ref{sec:construction}.  In particular, given an ABF instance with $\ell$ variables, we show that Player A can win the ABF game if and only if $\ell+3$ cops can capture a robber on $G$.  In the argument below, we must carefully consider all possible moves by the cops and robber.  As in Section~\ref{sec:prelim}, we say that a player {\em must not} make a move if that move clearly provides no benefit, and we assume neither player makes any such moves.  The key to the reduction is that very few potential moves remain.

\pagebreak

\begin{theorem}\label{thm:ABF_proof}
\textsc{ABF} reduces to \textsc{LC\&Rp} in polynomial time.
\end{theorem}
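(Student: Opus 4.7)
The plan is to verify that the LC\&Rp game on the constructed graph $G$ with $\ell+3$ cops (where $\ell = \size{X}+\size{Y}$) faithfully simulates the \textsc{ABF} game. The argument has three stages: first, establishing that rational play is funneled into the basic configurations described in Section~\ref{sec:construction}; second, translating a winning strategy for Player A into a winning strategy for the cops; and third, translating a winning strategy for Player B into a winning strategy for the robber.

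First I would account for the $\ell+3$ cops. Since $r^*$ is protected-adjacent to essentially every vertex and its protected loop makes it a safe haven, one cop must permanently occupy $c^*$, as any departure lets the robber reach $r^*$ unopposed. The reset gadget then ensures, by claims analogous to those in the proof of Theorem~\ref{thm:lcrdp}, that if the remaining $\ell+2$ cops do not initially occupy exactly $s_1,\ldots,s_k$, then the robber can safely reach the reset clique $\{r_1,\ldots,r_k\}$ and force the cops to occupy $c_1,\ldots,c_k$. The asymmetric connections in the $w_{i,j}$ subgadget---in particular the substitutions of $r_1w_{1,2}$ for $r_1w_{1,1}$, and of $c_1w_{1,3}$ and $c_2w_{1,1}$ for their unswapped counterparts---are designed so that once the robber escapes the reset clique via $v$, the only cop configuration avoiding a subsequent forced escape is the matching in which the $i$th cop occupies $s_i$. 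Consequently the game is quickly forced into the initial configuration, which also doubles as a basic configuration corresponding to the declared starting assignment of \textsc{ABF}.

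Next I would perform a case analysis to show that from any basic configuration the only productive moves correspond exactly to \textsc{ABF} moves. The variable and intermediate vertices are always defended by the cop on $c^*$, so the robber cannot enter them. From $v$ the robber's only safe productive moves are into some entrance vertex $a_j^T$ or $a_j^F$ of a $y_j$-gadget; entering such a gadget triggers a sequence of uniquely-forced moves, where at each step the robber threatens to reach a restriction vertex (and hence the reset clique), and the corresponding variable cop has a unique response, until ultimately the $y_j$-cop has shifted between $y_j^T$ and $y_j^F$ and the robber has returned to $v$. Symmetrically, the cops' only productive moves are shifting some $x_i$ along the edge $x_i^Tx_i^F$, or moving the cop on $s$ to $u$; any other move either frees an undefended path to the reset clique or is strategically equivalent to passing. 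When the $s$-cop advances to $u$, the robber is forced to flee to a clause vertex $u_C$, and $u_C$ is defended by the variable cops if and only if the clause $C$ is satisfied under the current assignment; so the robber escapes to the reset clique exactly when $\varphi$ is false and is captured exactly when $\varphi$ is true, matching the \textsc{ABF} winning condition.

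The main obstacle is rigorously justifying every ``must'' and ``must not'' inside the $y_j$-gadget and the reset gadget, since each such assertion requires verifying that every alternative move either leads to immediate capture of the deviator or returns the game to a configuration already analyzed---typically the initial configuration, which only benefits the robber if he was the deviator and merely forces the cops to restart their strategy if they were. The formal proof will require careful enumeration of neighborhoods using the adjacency table, checking for each potential deviation that either an appropriate restriction, clause, or reset vertex becomes undefended, or that the robber's threat toward $r^*$ is unmet. Polynomial-time constructibility of $G$ is immediate since $G$ has $O(\size{X}+\size{Y}+\size{\varphi})$ vertices and all adjacencies can be listed directly from the construction, so establishing strategic equivalence completes the reduction.
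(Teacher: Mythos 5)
Your plan follows the paper's proof essentially step for step: pinning a cop to $c^*$, using the reset gadget to force the initial configuration (the paper's Claims 1a and 1b), showing that from a basic configuration the only productive moves are $x_i$-shifts, $y_j$-shifts, or the $s \to u$ capture attempt (Claims 2--5), and matching capture versus escape to the truth of $\varphi$ (Claim 6). The one mechanical slip is that inside the $y_j$-gadget the forcing threat runs through the secondary robber vertices $b_j^T, b_j^{\prime T}$ (which give access to $h$, i.e.\ the reset clique), while the restriction vertices serve only to pin the variable cops to their assigned variable vertices; otherwise the decomposition and the strategy translation are exactly those of the paper.
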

\begin{proof}
Given an \textsc{ABF} instance $(X,Y,\varphi)$, we construct the graph $G$ as outlined in Section~\ref{sec:construction}.  Denote the sizes of $X$ and $Y$ by $m$ and $n$, respectively, and let $k=m+n+2$.  We play the \textsc{ABF} game and the \textsc{LC\&Rp} game (with $k+1$ cops) simultaneously, with each player using a strategy for the \textsc{ABF} game to guide his or her play in the \textsc{LC\&Rp} game.  Before we give explicit strategies for the players, we prove several claims about the flow of the game.  Our aim is to show that the players do not actually have much freedom in choosing their moves.  Recall that we suppose $\varphi$ is in conjunctive normal form; that is, $\varphi = C_1 \wedge C_2 \wedge \cdots \wedge C_{\ell}$, where each $C_i$ is the disjunction of literals.  We assume $\varphi$ is initially false, since otherwise Player A trivially wins the \textsc{ABF} game.  

\medskip\medskip

\noindent {\bf Claim 1a}: If the robber stands in the reset clique and it is his turn, then he can either escape capture forever or force the game to reach an initial configuration.\\

We assume throughout that some cop always occupies $c^*$.  If not, the robber can move to $r^*$ and remain there until some cop moves to $c^*$, at which point the robber can move anywhere in $G$; this clearly cannot benefit the cops. 

The cops can defend the entire reset clique only by occupying vertices $c_1, c_2, \ldots, c_k$.  Until this happens, the robber can always move (or remain on) to some undefended vertex in the clique. 

Suppose now that the cops occupy $c_1, c_2, \ldots, c_k$, the robber occupies some $r_{\ell}$, and it is the robber's turn.  We suppose $\ell \not = 1$; the other case is similar.  The robber moves to $w_{\ell,1}$.  Only $s_1$ defends $w_{\ell,1}$, so the robber cannot be captured unless some cop first moves to $s_1$. 

If some cop reaches $s_1$ while the remaining cops still occupy $c_2, c_3, \ldots, c_k$, then the robber moves to $w_{\ell,2}$.  Suppose instead that for some $j \ge 2$, the cop on $c_j$ moves.  The robber now moves to $t_1$.  At this point the cops occupy all $c_i$ for $i \not \in \{1,j\}$, and no cop occupies $s_1$.  If some cop (other than the cop on $c^*$) moves to $c_j$, then the robber returns to $w_{\ell,1}$ and proceeds as before.  Otherwise, vertex $r_j$ remains undefended; the robber moves there, gaining access to the reset clique, and the process begins anew.

Hence we may assume that the robber occupies $w_{\ell,2}$, that cops occupy $s_1, c_2, c_3, \ldots, c_k$, and that it is the cops' turn.  If the cop on $s_1$ leaves, then the robber returns to $w_{\ell,1}$ and plays as in the preceding paragraph.  If the cop on some $c_j$ moves for $j \ge 3$, then the robber moves to $t_2$; at this point both $r_1$ and $r_j$ are undefended and the cops cannot defend both with their next move, so the robber can safely return to the reset clique.  If the cop on $c_2$ moves anywhere other than $s_2$, then the robber moves to $s_2$; from here, if the cop returns to $c_2$ then the robber returns to $w_{\ell,2}$, and otherwise the robber can safely move to $r_2$.  If the cops all remain in place, then so does the robber.  None of the preceding cases benefit the cops; the only remaining possibility is that the cop on $c_2$ moves to $s_2$, to which the robber responds by moving to $w_{\ell,3}$.  

We now assume that the robber occupies $w_{\ell,3}$, that cops occupy $s_1, s_2, c_3, \ldots, c_k$, and that it is the cops' turn.  If the cops make any move that leaves $t_3$, $r_1$, and $r_2$ undefended, then the robber moves to $t_3$; the cops cannot defend both $r_1$ and $r_2$ with a single move, so no matter how they respond, the robber regains access to the reset clique.  If instead the cop on $s_2$ moves to $c_2$, then the robber returns to $w_{\ell,2}$ and plays as in the preceding paragraph.  Finally, if the cop on $s_1$ moves to $c_1$, then the robber moves to $w_{\ell,1}$ and uses a strategy symmetric to the one in the preceding paragraph.  In particular, if the cop on $s_2$ leaves, then the robber returns to $w_{\ell,1}$; if the cop on some $c_j$ moves for $j \ge 3$, then the robber moves to $t_2$ and subsequently regains access to the reset clique; if the cop on $c_1$ moves anywhere other than $s_1$, then the robber moves to $s_1$ and, from there, either returns to $w_{\ell,1}$ or moves to $r_1$ as appropriate; if all cops remain in place, so does the robber; finally, if the cop on $c_1$ moves to $s_1$, then the robber returns to $w_{\ell,3}$.  None of the preceding options benefit the cops, so we may assume that the cop on $c_3$ moves to $s_3$.  The robber responds by moving to $w_{\ell,4}$.

With the robber on $w_{\ell,4}$ the arguments become simpler.  If the cop on $c_4$ moves to $s_4$, then the robber moves to $w_{\ell,5}$; if the cops make any other move, then the robber moves to $t_4$ and subsequently to some undefended $r_i$.  Continuing in this way, the cops must ``push'' the robber down through the $w_{\ell,j}$ to $w_{\ell,k}$ and from there to $v$, at which point we reach the initial configuration.

\medskip\medskip

\noindent {\bf Claim 1b}: If the cops occupy vertices $c_1, c_2, \ldots, c_{k}$ and $c^*$, and it is the cops' turn, then the cops can either capture the robber or force the game to reach the initial configuration.\\

The cops defend all vertices aside from $w_{1,2}$, the $w_{\ell,1}$ (for $\ell \not = 1$), and the $t_i$, so we may assume the robber occupies one of these.  If the robber occupies some $t_i$, then the cop on $c_i$ moves to $s_i$, which ensures the robber's capture.  Hence we suppose the robber occupies either $w_{1,2}$ or $w_{\ell,1}$ for some $\ell \not = 1$.  The cases are similar; for convenience we consider only the latter case.  

At this point the cops occupy $c_1, c_2, \ldots, c_k$ and $c^*$, the robber occupies $w_{\ell,1}$, and it is the cops' turn.  The cop on $c_1$ moves to $s_1$.  The cops now defend $r_{\ell}$, $w_{\ell,1}$, $s_1$, and $t_1$, so the robber must move to $w_{\ell,2}$.  The cop on $c_2$ now moves to $s_2$, which forces the robber to $w_{\ell,3}$, and so forth.  Once the robber reaches $w_{\ell,k}$, the cop on $c_k$ moves to $s_k$; the robber must now move to $v$, which produces the initial configuration.

\medskip\medskip

\noindent {\bf Claim 2}: If the game is in a basic configuration and it is the robber's turn, then the robber must either remain in place, move to some vertex $a_j^T$ such that $y_j$ is currently false, or move to some vertex $a_j^F$ such that $y_j$ is currently true.\\

Vertex $v$ is adjacent to the restriction vertices, the clause vertices, the intermediate vertices, the primary robber vertices, $u$, $c_k$, the $w_{i,k}$, and $r^*$.  In a basic configuration, the cop on $c^*$ defends $r^*$, the cop on $s_k$ defends $c_k$ and the $w_{i,k}$, the cop on $s$ defends the clause vertices and $u$, and the variable cops defend the restriction vertices, the intermediate vertices, the $a^{\prime T}_j$, and the $a^{\prime F}_j$.  Moreover, the variable cop assigned to $y_j$ defends $a_j^T$ when $y_j$ is true and $a_j^F$ when $y_j$ is false.  Thus the only undefended vertices in $N[v]$ are $v$ itself, vertices $a_j^T$ for which $y_j$ is false, and vertices $a_j^F$ for which $y_j$ is true.

\medskip\medskip
 
\noindent {\bf Claim 3}: If the game is in a basic configuration (on the robber's turn), $\varphi$ is currently false, and the robber moves to some vertex $a_j^T$ such that $y_j$ is currently false (or to some $a_j^F$ such that $y_j$ is currently true), then $y_j$ shifts, after which the game returns to a basic configuration (on the cops' turn).\\

It follows from Claims 1a and 1b that the cops must not allow the robber safe access to the reset clique.  If the robber reaches the reset clique then he can force the game into the initial configuration, which clearly does not benefit the cops, as they already had an opportunity to force the initial configuration (as in Claim 1b).  Likewise, the cops must not allow the robber access to vertex $h$ (which in reality represents the reset clique).

By symmetry we assume that the robber moves to some $a_j^T$ such that $y_j$ is currently false.  The cops must now defend $b_j^T$, lest the robber move there and next to $h$.  Only four vertices defend $b_j^T$, namely $a_j^T$, $b_j^T$, $y_j^*$, and $u$.  No cops are adjacent to $a_j^T$ or $b_j^T$.  The only cop adjacent to $u$ is the cop on $s$.  Since $\varphi$ is false, it contains some false clause $C$; should the cop on $s$ move to $u$ she would leave $u_C$ undefended, so the robber could move there and subsequently to $h$.  The cop on $c^*$ cannot move to $y_j^*$, since this would leave $r^*$ undefended.  The only remaining option is for the cop on $y_j^F$ to move to $y_j^*$. 

The robber cannot remain on $a_j^T$, nor can he move to $b_j^T$, $y_j^T$, $y_j^*$, or $v$, lest the cop on $y_j^*$ capture him.  He also cannot move to $u$ or to any of the clause vertices, lest the cop on $s$ would capture him.  Thus he must move to $a^{\prime T}_j$.  As before, the cops must now defend $b_j^{\prime T}$, and as before, they must do so by moving the cop on $y_j^*$ to $y_j^T$.  Now the robber cannot remain on $a^{\prime T}_j$, the cop on $c^*$ defends $y_j^T$ and $y_j^F$, the cop on $s$ defends $u$ and the clause vertices, and the cop on $y_j^T$ defends $b_j^{\prime T}$ and $a_j^T$.  Hence the robber must move to $v$.  Thus $y_j$ has shifted, the game has returned to a basic configuration, and it is the cops' turn.

\medskip\medskip

\noindent {\bf Claim 4}: If the robber occupies $v$ or a primary robber vertex, all variable cops occupy their assigned variable vertices, some cop occupies $c^*$, some cop occupies $s_k$, some cop occupies $s$,  and $\varphi$ is currently false, then the cop on $s$ must not move.\\

Should the cop on $s$ move anywhere other than $u$ or $r_{k-1}$ she would leave $u$ undefended, allowing the robber to move to $u$ and from there to $h$.  Now suppose the cop on $s$ moves to $u$ or $r_{k-1}$.  Since $\varphi$ is false, some clause $C$ of $\varphi$ is false.  The vertex $u_C$ is defended only by $s$, $v$, $c_k$, the primary robber vertices, and those variable vertices corresponding to literals in $C$.  Since $C$ is false, $u_C$ is not defended by any variable cop, nor is it defended by the cop on $c^*$, the cop on $s_k$, or the final cop (who occupies either $u$ or $r_{k-1}$).  Thus the robber can safely move to $u_C$ and subsequently to $h$.

\medskip\medskip

\noindent {\bf Claim 5}: If the game is in a basic configuration, it is the cops' turn, and $\varphi$ is currently false, then either all cops remain in place, or the cops execute a shift in some $x_i$.\\

As noted in Claim 3, the cops must not allow the robber safe access to the reset clique (or to the symbolic vertex $h$).  If the cop on $c^*$ moves anywhere, then the robber moves to $r^*$ and subsequently to some undefended vertex in the reset clique.  If the cop $C$ on $s_k$ moves to $t_k$ or to some $w_{i,k}$, then the robber moves to $c_k$ and, on his next turn, to $r_k$ (note that the cops cannot defend $r_k$ before the robber reaches it).  If instead $C$ moves to $c_k$ or $r_k$, then the robber moves to $w_{1,k}$; if $C$ next returns to $s_k$ then the robber returns to $v$ (an exchange that does not benefit the cops), and otherwise the robber moves to $t_k$ and subsequently to some undefended vertex of the reset clique.  By Claim 4, the cop on $s$ must not move.  Only the variable cops remain.  The variable cops must not leave their assigned variable vertices; this would leave some restriction vertex undefended, allowing the robber to move there and next to $h$.  Hence the only variable cops that may move are those assigned to the $x_i$, and their only option is to execute a shift.  

\medskip\medskip

\noindent {\bf Claim 6}: If any shift makes $\varphi$ true, then the cops can capture the robber.\\

By the preceding claims the game eventually reaches a basic configuration, after which shifts are executed in sequence until $\varphi$ becomes true.  Moreover, a shift in some $x_i$ yields a basic configuration in which it is the robber's turn, while a shift in some $y_j$ yields a basic configuration in which it is the cops' turn.

Suppose first that $\varphi$ becomes true after a shift in some $y_j$.  From the resulting basic configuration, the cop on $s$ may move to $u$, threatening to capture the robber.  The robber clearly cannot remain on $v$.  Vertex $v$ is adjacent to $u$, $c_k$, the restriction vertices, the clause vertices, the intermediate vertices, the primary robber vertices, and the $w_{i,k}$.  The variable cops defend the restriction vertices and intermediate vertices, the cop on $s_k$ defends $c_k$ and the $w_{i,k}$, and the cop on $u$ defends the primary robber vertices and $u$ itself.  Thus the robber must move to some clause vertex.  However, each clause of $\varphi$ has at least one true literal, and hence each clause vertex is defended by at least one variable cop.  No matter where the robber moves, the cops can capture him on their next turn.

Now suppose $\varphi$ becomes true after a shift in some $x_i$.  By Claim 3, the robber must either stay on $v$ or move to some $a_j^T$ or $a_j^F$.  Regardless of the robber's choice, the cop on $s$ may move to $u$, threatening to capture the robber.  As above, $v$ and all its neighbours are defended; additionally, the cop on $u$ defends all secondary robber vertices, and the cop on $c^*$ defends all variable vertices.  No matter which vertex the robber currently sits on, he cannot escape capture.

\medskip\medskip

We are finally ready to give the players' strategies.  Suppose first that Player A has a winning strategy for the \textsc{ABF} game; we show how $k+1$ cops can capture the robber on $G$.  The cops initially occupy $c_1, c_2, \ldots, c_{k}$ and $c^*$; from there, they force the game into the initial configuration (which is also a basic configuration) as in Claim 1b.

From a basic configuration, on the cops' turns, they play as follows.  If $\varphi$ is true, then the cops capture the robber as in Claim 6.  Otherwise, the cops consult Player A's strategy in the \textsc{ABF} game.  If Player A changes the value of some $x_i$, then the cops execute a shift on $x_i$.  If instead Player A changes no variables, then the cops all remain in place.  In either case the \textsc{LC\&Rp} game remains in a basic configuration, but it is now the robber's turn.

From a basic configuration, on the robber's turns, he has very few options.  By Claims 2 and 3, the robber must either remain in place or initiate a shift on some $y_j$.  In the former case, the cops act as if Player B changed no variables in the \textsc{ABF} game, while in the latter case, they act as if he changed the value of $y_j$.  In either case, the \textsc{LC\&Rp} game soon returns to a basic configuration by Claim 3.

Play continues in this manner.  Since the cops follow a winning strategy for Play A in the \textsc{ABF} game, eventually some shift makes $\varphi$ true.  By Claim 6, the cops can then capture the robber.\\

Now suppose Player B has a winning strategy for the \textsc{ABF} game; we show how the robber can perpetually evade $k$ cops on $G$.  Initially, if $r^*$ or any vertex of the reset clique is undefended, then the robber starts there; otherwise the cops must occupy $c_1, c_2, \ldots, c_k$ and $c^*$, and the robber starts on $w_{2,1}$.  The robber forces the game into the initial configuration as in Claim 1a and, as before, the cops and robber subsequently execute a series of shifts.  When the cops shift $x_i$, the robber acts as if Player A changed the value of $x_i$.  From a basic configuration on the robber's turns, he consults Player B's strategy in the \textsc{ABF} game.  If Player B changes the value of some $y_j$, then the robber initiates a shift on $y_j$, and the game soon returns to a basic configuration by Claim 3.  By Claims 2-5, the \textsc{LC\&Rp} game continues in this manner so long as $\varphi$ is false; since Player B perpetually prevents $\varphi$ from becoming true in the \textsc{ABF} game, the robber perpetually escapes capture in the \textsc{LC\&Rp} game.\\

As noted in Section~\ref{sec:construction}, the \textsc{LC\&Rp} instance can clearly be constructed in time polynomial in the size of the \textsc{ABF} input.    
\end{proof}

\medskip\medskip

Thus \textsc{ABF} reduces to \textsc{LC\&Rp} (Theorem~\ref{thm:ABF_proof}), \textsc{LC\&Rp} reduces to \textsc{C\&Rp} (Theorem~\ref{thm:lcrdp}), and \textsc{C\&Rp} reduces to \textsc{C\&R} (\cite{Mam13}, Lemma 3.1).  Consequently, since \textsc{ABF} is \class{EXPTIME}-hard (\cite{SC79}), so is \textsc{C\&R}.  As noted in the introduction, \textsc{C\&R} clearly belongs to \class{EXPTIME}, so Conjecture~\ref{conj:main} follows.  
\end{subsection}

\section*{Acknowledgements}

The author thanks Anthony Bonato for his careful reading of and many useful comments on the paper.  The author is also greatly indebted to Paul Hunter for helping resolve a subtle but significant issue in the proof of the main result.

\end{section}
\end{document}